  \numberwithin{equation}{section} 
\newtheorem{thm}{Theorem}[section]
\newtheorem{lem}[thm]{Lemma}
\newtheorem{cor}[thm]{Corollary}
\newcommand{\be}{\begin{equation}}
\newcommand{\ee}{\end{equation}}
\newcommand{\ba}{\begin{array}}
\newcommand{\ea}{\end{array}}
\newcommand{\bg}{\begin{gathered}}
\newcommand{\eg}{\end{gathered}}
\newcommand{\RR}{Rogers--Ramanujan}
\newcommand{\gauss}[2]{\genfrac{[}{]}{0pt}{}{#1}{#2}_q}
\newcommand{\bea}{\begin{eqnarray}}
\newcommand{\eea}{\end{eqnarray}}
\newcommand{\Sum}{\sum_{n=0}^\infty}
\begin{document}

\title{$q$-Bessel Functions and Rogers-Ramanujan Type Identities}
\author{ Mourad E. H.  Ismail 
 \thanks{Research partially supported  by the DSFP of King Saud University} \\
 Department of Mathematics, King Saud University\\
 Riyadh, Saudi Arabia  \\ and Department of Mathematics, University 
 of Central Florida, \\
 Orlando, Florida 32816, USA\\
 \and Ruiming Zhang \thanks{Research partially supported by National Science Foundation of China, grant No. 11371294.} \\
College of Science\\
Northwest A\&F University\\
Yangling, Shaanxi 712100\\
P. R. China.}

\maketitle

\begin{abstract}
 We evaluate $q$-Bessel functions at an infinite sequence of points 
 and introduce a generalization of the Ramanujan function and give 
 an extension of the $m$-version of the Rogers-Ramanujan identities.  
 We also prove several generating functions for Stieltjes-Wigert 
 polynomials with argument depending on the degree. In addition 
 we give  several  Rogers-Ramanujan type identities.  
 \end{abstract}
{\bf Mathematics Subject Classification MSC 2010}: Primary 11P84, 33D45 Secondary  05A17. 
\noindent
\\
{\bf Key words}: Stieltjes-Wigert polynomials, generating functions, expansions, the Ramanujan function, Rogers--Ramanujan identities, 
$q$-Bessel functions.

\bigskip

\noindent{\bf filename}: Ism-ZhanRR2

\bigskip

\section{Introduction}
The  Rogers--Ramanujan identities  are 
\bea
\bg
\Sum \frac{q^{n^2}}{(q;q)_n} = \frac{1}{(q, q^4;q^5)_\infty} \\
\Sum \frac{q^{n^2+n}}{(q;q)_n} = \frac{1}{(q^2, q^3;q^5)_\infty},
 \eg
 \label{eqRR}
 \eea
 where the notation for the $q$-shifted factorials is the standard notation in \cite{Gas:Rah}, 
 \cite{Ismbook}.  References for the Rogers-Ramanujan identities, their origins and many of 
 their applications are in \cite{And2}, \cite{And}, and \cite{And:Ask:Roy}.  In particular we recall  
 the partition theoretic interpretation of the first Rogers--Ramanujan identity as the partitions of an 
 integer $n$ into parts $\equiv 1 \textup{or}\; 4 (\textup{mod} \; 5)$ are equinumerous with the part ions of $n$ 
 into parts where any two parts differ by at least 2. 
 
 Garrett, Ismail, and Stanton 
 \cite{Gar:Ism:Sta} proved the $m$-version of the Rogers-Ramanujan identities
 \bea
 \Sum \frac{q^{n^2+mn}}{(q;q)_n} =\frac{(-1)^m q^{-\binom{m}{2}} a_m(q)}{(q,q^4;q^5)_\infty}
- \frac{(-1)^{m} q^{-\binom{m}{2}} b_m(q)}{(q^2,q^3;q^5)_\infty},
\label{eqmform}
 \eea
 where
 \begin{equation}
\label{eq13.6.1}
\begin{gathered}
a_m(q)=\sum_{j} q^{j^2+j}\bmatrix m-j-2 \\ j\endbmatrix_q, \\
b_m(q) =\sum_{j} q^{j^2}\bmatrix m-j-1 \\ j\endbmatrix_q.
\end{gathered}
\end{equation}
The polynomials $a_m(q)$ and $b_m(q)$ were considered by Schur in conjunction with his proof of the  
Rogers--Ramanujan identities, see
\cite{And2}  and \cite{Gar:Ism:Sta}   for details. We shall refer to $a_m(q)$ and $b_m(q)$ as the Schur polynomials. The closed form expressions for $a_m$ and $b_m$ in  \eqref{eq13.6.1}  were given by 
 Andrews in \cite{And4}, where he also  
gave a polynomial generalization of the Rogers--Ramanujan identities. 
In this paper we give a family of Rogers--Ramanujan type identities involving the evaluation of $q$-Bessel and allied functions at special points. We also give the partition theoretic interpretation of these 
identities. In Section 2 we define the functions and polynomials used in our analysis. In Section 3 we present our \RR type identities. They resemble the $m$ form in \eqref{eqmform}.

In a series of papers from 1903 till 1905 F. H. Jackson  introduced $q$-analogues of 
Bessel functions. The modern notation for the modified $q$-Bessel functions, that is $q$-Bessel functions with imaginary argument, is, \cite{Ism82},
\begin{eqnarray}
I_\nu^{(1)}(z;q) &=&  \frac{(q^{\nu+1};q)_\infty} {(q;q)_\infty} 
\Sum \frac{   (z/2)^{\nu+2n}}{(q, q^{\nu+1};q)_n}, \quad    |z| < 2, 
\label{eqInu1}\\
I_\nu^{(2)}(z;q) &=& \frac{(q^{\nu+1};q)_\infty} {(q;q)_\infty}  \Sum \frac{  q^{n(n+\nu)}}{(q, q^{\nu+1};q)_n} (z/2)^{\nu+2n},\\
I_\nu^{(3)}(z;q) &=& \frac{(q^{\nu+1};q)_\infty} {(q;q)_\infty}  \Sum \frac{ q^{\binom{n}{2}}}
 {(q, q^{\nu+1};q)_n} (z/2)^{\nu+2n}. 
\label{eqInu3}
\end{eqnarray}
  The functions  $I_\nu^{(1)}$ and $I_\nu^{(2)}$ are related via 
  \begin{eqnarray}
  \label{eqIni-Inu2}
  I_\nu^{(1)}(z;q) =\frac{ I_\nu^{(2)}(z;q)}{(z^2/4;q)_\infty},  
  \end{eqnarray}
  \cite[Theorem 14.1.3]{Ismbook}.   
Formula \eqref{eqIni-Inu2} analytically continues $I_\nu^{(1)}$ to a 
meromorphic  function in the complex plane.  
The Stieltjes--Wigert polynomials \cite{Ismbook}, \cite{Sze},
are defined by 
\begin{eqnarray}
\label{eq:stieltjes1}
\begin{gathered}
\quad S_n(x;q)  =  \frac{1}{(q;q)_{n}}\sum_{k=0}^{n} 
\gauss{n}{k} q^{k^{2}}\left(-x\right)^{k}  
  =  \frac{1}{(q;q)_{n}}\sum_{k=0}^{n}\frac{(q^{-n};q)_{k}}{(q;q)_{k}}
  q^{\binom{k+1}{2}}\left(xq^{n}\right)^{k},   
\end{gathered}
\end{eqnarray}
respectively. Ismail and C. Zhang \cite{Ism:ZhaC} proved the following symmetry relation 
for the Stieltjes--Wigert polynomials
\bea
\label{eqsym}
q^{n^2}(-t)^nS_n(q^{-2n}/t;q)= S_n(t;q).
\eea

Section 2 contanis the evaluation of $I_\nu^{(2)}$ at an infinite 
number of special points. These new sums seem to be new. In 
Section 3 we introduce a generalization of the Ramanujan function  
\bea
\label{eqdefAq}
A_q(z) := \sum_{n=0}^\infty \frac{(-z)^n}{(q;q)_n} q^{n^2},
\eea
which  S. Ramanujan introduced and studied many of its properties 
In the lost note book \cite{Ram}. 
It was later realized that this is an analogue of the Airy function. In 
Section 4 we introduce a function $B^\alpha_q$ prove some identities 
it satisfies then use them to derive several Rogers-Ramanujan type 
identities. The function $B^\alpha_q$ is also a generalization of the 
Ramanujan function and is expected to lead to numerous new 
Rogers-Ramanujan type identities. The Stieltjes-Wigert polynomials 
satisfy a second order $q$-difference equation of polynomial 
coefficients of the for 
\bea
\notag
f(x) y(qx) + g(x) y(x) + h(x) y(x/q) =0. 
\eea
In Section 5 we evaluate $y(q^nx)$ in terms of $y(x$ and $y(x/q)$ 
with explicit coefficients. Section 6 contains misclaneous properties 
of the Stieltjes-Wigert polynomials. 

\section{$q$-Bessel Sums}
Our first result is the following theorem.
\begin{thm}\label{SpecialV}
The function $I_{\nu}^{(2)}$ has the represetation  
\bea
I_{\nu}^{(2)}\left(2z;q\right) 
= \frac{z^\nu}{(q;q)_\infty} \; {}_1\phi_1(z^2; 0; q, q^{\nu+1}).
\label{eq:specialvalugeneral}
\eea
In particular $I_{\nu}^{(2)}$  takes the special values  
\begin{equation}
I_{\nu}^{(2)}\left(2q^{-n/2};q\right)=\frac{q^{\nu n/2}S_{n}\left(-q^{-\nu-n};q\right)}{\left(q^{n+1};q\right)_{\infty}},\label{eq:special value bessel 4}
\end{equation}
 and 
\begin{equation}
I_{\nu}^{(2)}\left(2 q^{-n/2};q\right)=\frac{ q^{-\nu n/2}S_{n}\left(-q^{\nu-n};q\right)}{\left(q^{n+1};q\right)_{\infty}}\label{eq:special value bessel 5}
\end{equation}
\end{thm}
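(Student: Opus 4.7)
The plan is to establish the three displays in sequence: \eqref{eq:specialvalugeneral} via a direct rearrangement of the ${}_1\phi_1$ series, \eqref{eq:special value bessel 5} by specializing $z=q^{-n/2}$ and recognizing the Stieltjes--Wigert polynomial from \eqref{eq:stieltjes1}, and finally \eqref{eq:special value bessel 4} as an immediate corollary of \eqref{eq:special value bessel 5} via the symmetry relation \eqref{eqsym}.

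For \eqref{eq:specialvalugeneral} I would start from the right-hand side, writing ${}_1\phi_1(z^2;0;q,q^{\nu+1})=\sum_{n\ge 0}(z^2;q)_n(-1)^n q^{\binom{n}{2}+n(\nu+1)}/(q;q)_n$, and expanding $(z^2;q)_n=\sum_{k=0}^n\gauss{n}{k}(-z^2)^k q^{\binom{k}{2}}$ by the $q$-binomial theorem. After interchanging the order of summation and reindexing with $n=k+m$, the identity $\binom{k+m}{2}=\binom{k}{2}+\binom{m}{2}+km$ lets the exponent split so that the inner sum over $m$ takes the form $\sum_{m\ge 0}(-1)^m q^{\binom{m}{2}}(q^{k+\nu+1})^m/(q;q)_m$. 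Euler's identity sums this to $(q^{k+\nu+1};q)_\infty=(q^{\nu+1};q)_\infty/(q^{\nu+1};q)_k$, and the remaining sum over $k$ matches, up to the prefactor $(q;q)_\infty^{-1}$, the series defining $I_\nu^{(2)}(2z;q)/z^\nu$.

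For \eqref{eq:special value bessel 5} I would specialize $z=q^{-n/2}$ in \eqref{eq:specialvalugeneral}; since $(q^{-n};q)_k=0$ for $k>n$, the ${}_1\phi_1$ terminates at $k=n$. Substituting the standard reversal $(q^{-n};q)_k=(-1)^k q^{\binom{k}{2}-nk}(q;q)_n/(q;q)_{n-k}$, the signs and $q$-powers collapse and the series reduces to $\sum_{k=0}^n\gauss{n}{k}q^{k^2+k(\nu-n)}$, which by \eqref{eq:stieltjes1} equals $(q;q)_n S_n(-q^{\nu-n};q)$. Using $(q;q)_n/(q;q)_\infty=1/(q^{n+1};q)_\infty$ then produces \eqref{eq:special value bessel 5}.

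Finally, for \eqref{eq:special value bessel 4} I would apply the symmetry relation \eqref{eqsym} with $t=-q^{\nu-n}$: since $(-t)^n=q^{(\nu-n)n}$ and $q^{-2n}/t=-q^{-\nu-n}$, this gives $S_n(-q^{\nu-n};q)=q^{\nu n}S_n(-q^{-\nu-n};q)$, which upon substitution into \eqref{eq:special value bessel 5} converts the prefactor $q^{-\nu n/2}$ to $q^{\nu n/2}$ and changes the argument of $S_n$, yielding \eqref{eq:special value bessel 4}. The main obstacle is the careful bookkeeping of $q$-powers in the double-sum rearrangement of step~1; once the inner sum is identified with Euler's expansion of $(q^{k+\nu+1};q)_\infty$, everything else is a direct substitution.
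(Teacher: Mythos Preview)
Your proof is correct, but the route to \eqref{eq:specialvalugeneral} differs from the paper's. The paper writes $I_\nu^{(2)}(2z;q)$ as a limit $a,b\to\infty$ of a ${}_2\phi_1(a,b;q^{\nu+1};q,q^{\nu+1}z^2/ab)$, applies the second Heine transformation \cite[(III.2)]{Gas:Rah}, and then passes to the limit to obtain the ${}_1\phi_1$. Your argument instead expands the ${}_1\phi_1$ directly, inserts the finite $q$-binomial expansion of $(z^2;q)_n$, swaps sums, and collapses the inner sum with Euler's identity $\sum_{m\ge 0}(-x)^mq^{\binom{m}{2}}/(q;q)_m=(x;q)_\infty$. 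This is more elementary and self-contained (it does not invoke a ${}_2\phi_1$ transformation), at the cost of a double-sum manipulation; the paper's route is shorter but presupposes Heine. For the special values you derive \eqref{eq:special value bessel 5} first and then obtain \eqref{eq:special value bessel 4} via the symmetry \eqref{eqsym}, whereas the paper does these in the opposite order; since \eqref{eqsym} is an equivalence between the two evaluations, this reversal is immaterial.
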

\begin{proof} 
Recall  the Heine transformation \cite[(III.2)]{Gas:Rah}   
\bea
{}_2\phi_1  \left(\left. \begin{matrix} 
 A , B \\
C
\end{matrix}\, \right|q,Z\right) 
&=& \frac{(C/B, BZ;q)_\infty}{(C, Z;q)_\infty} 
\; {}_2\phi_1  \left(\left. \begin{matrix} 
 ABZ/C , B \\
BZ
\end{matrix}\, \right|q,\frac{C}{B}\right).  
\label{eqHeine2}
\eea
The left-hand side of  \eqref{eq:specialvalugeneral}  is 
\bea
\notag
\begin{gathered}
\frac{(q^{\nu+1};q)_\infty}{(q;q)_\infty}   z^\nu \sum_{k=0}^\infty
\frac{q^{k^2+k\nu}z^{2k}}{(q^{\nu+1}, q;q)_k} 
=
 \frac{(q^{\nu+1};q)_\infty}{(q;q)_\infty} z^\nu \lim_{a, b \to \infty}
 {}_2\phi_1  \left(\left. \begin{matrix} 
 a, b \\
q^{\nu+1}
\end{matrix}\, \right|q,\frac{q^{\nu+1}z^2}{ab} \right) \\
= \frac{(q^{\nu+1};q)_\infty}{(q;q)_\infty} z^{\nu} \frac{1}{(q^{\nu+1};q)_\infty} 
\; \lim_{a, b \to \infty}
 {}_2\phi_1  \left(\left. \begin{matrix} 
 z^2 , b \\
z^2 q^{\nu+1}/a
\end{matrix}\, \right|q,\frac{q^{\nu+1}}{b} \right) 
\end{gathered}
\eea
which implies \eqref{eq:specialvalugeneral}.  When $z = q^{-n/2}$ and  in view of \eqref{eq:stieltjes1},  the left-hand side of  \eqref{eq:special value bessel 4} 
equals its right-hand side.  
Formula \eqref{eq:special value bessel 5} follows from the symmetry relation \eqref{eqsym}
\end{proof}
 
 The results \eqref{eq:special value bessel 4}--\eqref{eq:special value bessel 5} of Theorem \ref{SpecialV} when written as a series becomes 
 \bea
 \bg
 \sum_{k=0}^\infty \frac{q^{k(k+\nu-n)}}{(q, q^{\nu+1};q)_k} 
 = \frac{q^{n\nu}}{(q^{\nu+1};q)_\infty}  \sum_{k=0}^n \gauss{n}{k} q^{k^2} q^{-k(\nu+n)}
 \\ = \frac{1}{(q^{\nu+1};q)_\infty}  \sum_{k=0}^n \gauss{n}{k} q^{k^2} q^{k(\nu-n)}. 
 \eg
 \eea

Another way to prove \eqref{eq:special value bessel 4} for integer $\nu$  is to use the generating function 
\bea
\sum_{-\infty}^\infty q^{\binom{m}{2}} I_m^{(2)}(z;q) t^m = (-tz/2, -qz/2t;q)_\infty.
\label{eqgfqBI}
\eea
Carlitz \cite{Car} did this for $n=0,1$ and used this to give another proof of the Rogers--Ramanujan identities.

\begin{thm}\textup{\cite{And2}}
The $q$-binomial coefficient  $\gauss{n}{k}$ is the generating  function  for integer partitions whose Ferrers diagrams fit inside a $k \times (n-k)$ rectangle.
 \end{thm}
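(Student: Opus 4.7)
The plan is to identify the generating function $P_{k,N}(q) := \sum_{\lambda \subseteq k \times N} q^{|\lambda|}$, where $\lambda$ runs over integer partitions whose Ferrers diagram fits inside the $k \times N$ rectangle (at most $k$ parts, each of size at most $N$), with the Gaussian polynomial $\gauss{k+N}{k}$, and then set $N = n-k$ at the end. I would use induction on $k+N$, driven by the classical $q$-Pascal recurrence
\[
\gauss{k+N}{k} \;=\; \gauss{k+N-1}{k} \;+\; q^{N}\,\gauss{k+N-1}{k-1},
\]
which is a short manipulation of $q$-factorials directly from the definition of $\gauss{\cdot}{\cdot}$. The base cases $P_{0,N}(q)=1=\gauss{N}{0}$ and $P_{k,0}(q)=1=\gauss{k}{k}$ are immediate, since the only partition fitting in a degenerate rectangle is the empty one.

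The combinatorial heart of the argument is to show that $P_{k,N}(q)$ satisfies the same recurrence. First I would partition the set of diagrams in the $k\times N$ box according to whether the first column has height $k$ or not. If the first column has height strictly less than $k$ (equivalently, the partition has fewer than $k$ parts counted with zero), then $\lambda$ already fits inside the $k \times (N-1)$ box obtained by deleting the leftmost column, contributing $P_{k,N-1}(q)$. If the first column has full height $k$, then removing that column yields a bijection with partitions fitting in a $(k-1)\times N$ box, but we have removed exactly $k$ cells, so the contribution is $q^{k} P_{k-1,N}(q)$. Transposing (or equivalently splitting by whether $\lambda_{1}=N$ instead of by column height) yields the alternative form $P_{k,N}(q) = P_{k,N-1}(q) + q^{N} P_{k-1,N}(q)$, matching the $q$-Pascal recurrence above.

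Combining the two recurrences, the inductive step is immediate, and we conclude $P_{k,N}(q) = \gauss{k+N}{k}$, which is the statement of the theorem after substituting $N = n-k$. I expect no serious obstacle here: the only thing to be careful about is choosing the split (by first row versus first column) that matches the chosen form of the $q$-Pascal identity, so that signs of exponents agree. An alternative, non-inductive route would be to write $\gauss{k+N}{k} = \prod_{j=1}^{k}\frac{1-q^{N+j}}{1-q^{j}}$ and expand each factor as a geometric series, reading off the resulting sum as enumerating sequences $0 \le \lambda_{k} \le \cdots \le \lambda_{1} \le N$; I would mention this as a sanity check but would favor the inductive proof for its clean bijective content.
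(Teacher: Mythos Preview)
Your argument is the standard inductive proof and is essentially correct; however, the paper itself does not prove this statement at all --- it is quoted as a known fact with attribution to Andrews' \textit{The Theory of Partitions} and used only as background. So there is no ``paper's own proof'' to compare your approach against.

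One small slip worth fixing: in your column-based split you have interchanged the two outcomes. If the first column has height strictly less than $k$ (i.e.\ fewer than $k$ nonzero parts), then $\lambda$ fits inside a $(k-1)\times N$ box, contributing $P_{k-1,N}(q)$; if the first column has full height $k$, deleting that column removes $k$ cells and leaves a partition in a $k\times(N-1)$ box, contributing $q^{k}P_{k,N-1}(q)$. Thus the column split actually yields $P_{k,N}(q)=P_{k-1,N}(q)+q^{k}P_{k,N-1}(q)$, the \emph{other} $q$-Pascal recursion. Your row-based split (case on whether $\lambda_{1}=N$) is stated correctly and matches the form of $q$-Pascal you wrote down, so the induction goes through once you rely on that version; the column argument is redundant and can be dropped or corrected.
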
 

Recall that 
\bea
\label{eqIvsJ}
I_\nu^{(j)}(z;q) = e^{-i\nu \pi/2} J_\nu^{(j)}( e^{i\pi/2}z;q), j=1,2.
\eea     
Chen, Ismail, and Muttalib \cite{Che:Ism:Mut} established an asymptotic series for $J_\nu^{(2)}(z;q)$. Their 
main term for $r> 0$  is 
\bea
\label{eqasJnu}
\bg
I_\nu^{(2)}(r;q) = (r/2)^\nu \frac{(q^{1/2};q)_\infty}{2(q;q)_\infty}  
\qquad    \qquad \qquad \qquad  \\
\qquad \qquad  \times 
\left[(rq^{(\nu+1/2)/2}/2;q^{1/2})_\infty + (-rq^{(\nu+1/2)/2}/2;q^{1/2})_\infty, 
\right]
\eg
\eea
as $r \to +\infty$. This determines the large $r$ behavior of the maximum modulus of $I_\nu^{(2)}$. 

We next derive a Mittag--Leffler expansion for $I_{\nu}^{(1)}$. 

\begin{thm} \label{Mittag}
We have the expansion 
\begin{equation}
I_{\nu}^{(1)}\left(z;q\right)=\frac{\left(\frac{z}{2}\right)^{\nu}}
{\left(q;q\right)_{\infty}^{2}}\sum_{n=0}^{\infty}
\frac{\left(-1\right)^{n}q^{\binom{n+1}{2}}S_{n}\left(-q^{\nu-n};q\right)}
{\left(1-z^{2}q^{n}/4\right)}.\label{eq:special value bessel 12}
\end{equation}
\end{thm}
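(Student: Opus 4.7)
The plan is to treat $I_\nu^{(1)}(z;q)$ as a meromorphic function of $w := z^2/4$. By \eqref{eqIni-Inu2} we may write
$$I_\nu^{(1)}(z;q) = (z/2)^\nu \, \tilde J(w)/(w;q)_\infty,$$
where $\tilde J(w) := I_\nu^{(2)}(z;q)/(z/2)^\nu$ is an entire function of $w$; thus $I_\nu^{(1)}$ has simple poles exactly at $z = \pm 2q^{-n/2}$, $n\ge 0$. A direct calculation, isolating the vanishing factor $1-wq^n$ inside $(w;q)_\infty$ and using $1-q^{-j} = -q^{-j}(1-q^j)$, gives
$$\operatorname{Res}_{w=q^{-n}}\frac{1}{(w;q)_\infty} = \frac{(-1)^{n+1}q^{\binom{n}{2}}}{(q;q)_n(q;q)_\infty}.$$
Formula \eqref{eq:special value bessel 5} together with $(q^{n+1};q)_\infty = (q;q)_\infty/(q;q)_n$ yields $\tilde J(q^{-n}) = (q;q)_n S_n(-q^{\nu-n};q)/(q;q)_\infty$. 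Combining the two and converting $1/(w-q^{-n}) = -q^n/(1-wq^n)$ (which absorbs an extra $q^n$ into $q^{\binom{n}{2}+n}=q^{\binom{n+1}{2}}$), the residue-at-pole contribution matches exactly the $n$-th summand on the right of \eqref{eq:special value bessel 12}.

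To close the argument, I would verify the expansion by matching Taylor coefficients at $w=0$ in the disc $|z|<2$. Expanding $1/(1-wq^n) = \sum_{k\ge 0} w^k q^{nk}$, inserting the closed form \eqref{eq:stieltjes1} for $S_n(-q^{\nu-n};q)$, and substituting $n = m+j$, the exponent simplifies by direct algebra to
$$\binom{n+1}{2} + j^2 + j(\nu-n) + nk = \binom{m+1}{2} + \binom{j+1}{2} + j(\nu+k) + mk,$$
so that the resulting double sum decouples as
$$\left(\sum_{m\ge 0}\frac{(-1)^m q^{\binom{m+1}{2}+mk}}{(q;q)_m}\right)\left(\sum_{j\ge 0}\frac{(-1)^j q^{\binom{j+1}{2}+j(\nu+k)}}{(q;q)_j}\right) = (q^{k+1};q)_\infty\, (q^{\nu+k+1};q)_\infty,$$
each factor evaluated by Euler's identity $\sum_{n\ge 0}(-y)^n q^{\binom{n+1}{2}}/(q;q)_n = (yq;q)_\infty$. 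Dividing by $(q;q)_\infty^2$ and rewriting the product as $(q;q)_\infty (q^{\nu+1};q)_\infty/[(q;q)_k(q^{\nu+1};q)_k]$ reproduces exactly the coefficient of $(z/2)^{\nu+2k}$ in the defining series \eqref{eqInu1} for $I_\nu^{(1)}$. Analytic continuation then extends the identity beyond $|z|<2$.

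The interchange of summations is legitimate because the Gaussian factor $q^{\binom{n+1}{2}}$ dominates the growth of $S_n(-q^{\nu-n};q)$, which is at worst of order $q^{-n^2/4}$ (visible from \eqref{eq:stieltjes1}, or equivalently from the asymptotic \eqref{eqasJnu} transcribed via \eqref{eq:special value bessel 5}), leaving a net decay of order $q^{n^2/4}$. The main technical hurdle is the exponent bookkeeping that produces the clean decoupling of the double sum into two independent Euler products; once that simplification is in hand, the rest of the proof is formal.
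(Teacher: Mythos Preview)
Your proof is correct and takes a genuinely different, somewhat more elementary route than the paper's. Both arguments are direct series verifications (the residue computation is, in each case, only motivational), but the organization differs. The paper expands $S_n(-q^{\nu-n};q)$ via \eqref{eq:stieltjes1} and interchanges the order so that, for each fixed $k$, the sum over $m=n-k$ keeps the pole factor $1/(1-z^{2}q^{k+m}/4)$ intact and is recognized as ${}_1\phi_1(z^{2}q^{k}/4;\,z^{2}q^{k+1}/4;\,q,q)$; this is then evaluated as $(q;q)_\infty/(z^{2}q^{k+1}/4;q)_\infty$ by the transformation \cite[(III.4)]{Gas:Rah}, and the remaining $k$-sum is precisely the ${}_1\phi_1$ form \eqref{eq:specialvalugeneral} of $I_\nu^{(2)}$, so that \eqref{eqIni-Inu2} finishes. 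You instead expand the pole factor as a geometric series in $w=z^{2}/4$, after which your exponent identity decouples the triple sum into two independent Euler products $(q^{k+1};q)_\infty(q^{\nu+k+1};q)_\infty$, reproducing the power-series coefficients of $I_\nu^{(1)}$ directly. Your approach needs only Euler's identity rather than a nontrivial ${}_2\phi_1$ transformation, at the cost of carrying one extra summation index; the paper's approach has the structural advantage of landing on the ${}_1\phi_1$ representation of $I_\nu^{(2)}$, tying the Mittag--Leffler expansion back to Theorem~\ref{SpecialV}.
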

Using residues it is easy to see that the difference 
between  $I_{\nu}^{(1)}\left(z;q\right)/(z^2;q)_\infty$ and the right-hand 
side of \eqref{eq:special value bessel 12}
is entire.  We give a direct proof that this difference is zero.
\begin{proof}[Proof of Theorem \ref{Mittag}]
Use  \eqref{eq:stieltjes1} to see that the sum on the right-hand side of 
\eqref{eq:special value bessel 12} is 
\bea
\notag
\bg
\sum_{n=0}^{\infty}
\frac{\left(-1\right)^{n}q^{\binom{n+1}{2}}}
{\left(1-z^{2}q^{n}/4\right)}\sum_{k=0}^n \frac{q^{k^2+k(\nu-n)}}
{(q;q)_k(q;q)_{n-k}} \qquad \qquad \qquad \qquad \\
=  \sum_{k=0}^\infty \frac{(-1)^k q^{k(\nu+(k+1)/2)}}
{(q;q)_k(1-z^2q^k/4)}\; {}_1\phi_1(z^2q^k/4; z^2q^{k+1}/4;q, q). 
\eg
\eea
Now apply (III.4) of \cite{Gas:Rah} with $a = z^2q^k/4, b=1, c =0, z=q$ 
to see that the above sum is  $(q;q)_\infty/(z^2q^{k+1}/4;q)_\infty$. 
This shows that the right-hand side of \eqref{eq:special value bessel 12} 
is given by 
\bea
\notag
\frac{(z/2)^\nu}{(q, z^2/4;q)_\infty} \; {}_1\phi_1(z^2/4; 0; q, q^{\nu+1}),
\eea
and the result follows from \eqref{eq:specialvalugeneral} and 
\eqref{eqIni-Inu2}. 
\end{proof}

\setcounter{equation}{0}
\section{A Generalization of the Ramanujan Function}
 The 
Rogers-Ramanujan identities evaluate $A_q$ at $z =-1, -q$, The result 
\eqref{eqmform} evaluates $A_q(-q^m)$. This motivated us to 
 consider the function
\bea
u_m(a, q) := \sum_{n=-\infty}^\infty \frac{q^{n^2+mn}}{(aq;q)_n}, 
\eea
as a function of $q^m$. 
When $a =1$ we get the Rogers-Ramanujan  function. It is clear that 
\bea
q^{m+1}u_{m+2}(a, q) =\sum_{n=-\infty}^\infty \frac{(1-aq^n)}{(aq;q)_n}q^{n^2+mn}
\nonumber
\eea
Therefore 
\bea
q^{m+1} u_{m+2}(a, q) = u_m(a,q) -a u_{m+1}(a,q). 
\eea 
Let $u_m(a,q) = q^{-\binom{m}{2}}(-1)^m \, \tilde{u}_m(a,q)$. Then 
$\{\tilde{u}_m(a,q)\}$ satisfy the difference equation 
\bea
\label{eqdiffeqY}
y_{m+1} = q^{m-1} \, y_{m-1} + a y_m, \quad m >0.
\eea
We now solve \eqref{eqdiffeqY} using generating functions. The generating function $Y(t) :=\Sum y_n t^n$ satisfies 
$$
Y(t) = \frac{y_0+t(y_1-ay_0)}{1-at} \, + \frac{t^2}{1-at}\; Y(qt),
$$
whose solution is 
\bea
\notag
Y(t) = \Sum \frac{q^{n(n-1)}t^{2n}}{(at;q)_{n+1}}\; [y_0+tq^n(y_1-ay_0]. 
\eea
We now need two initial conditions, so choose two solutions 
$\{c_m(a,q)\}$ 
and $\{d_m(a,q)\}$
\bea
\label{eqIC}
c_0(a,q) = 1, c_1(a, q) = 0, \;\;
c_0(a,q) = 0, d_1(a, q) = 1. 
\eea

\begin{thm}
The polynomials $\{c_m(a,q)\}$ 
and $\{d_m(a,q)\}$ have the generating functions 
\bea
\label{eqgecn}
\Sum  c_n(a,q) t^n &=& \Sum \frac{q^{n(n-1)}}{(at;q)_n}\, t^{2n}, 
\\ 
 \Sum   d_n(a,q) t^n &=& \Sum \frac{q^{n^2}t^{2n+1}}{(at;q)_{n+1}}.
\label{eqgedn}
\eea
\end{thm}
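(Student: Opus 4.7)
The plan is to specialize the closed form
\[
Y(t) = \Sum \frac{q^{n(n-1)}t^{2n}}{(at;q)_{n+1}}\,\bigl[y_0+tq^n(y_1-ay_0)\bigr]
\]
derived immediately before the theorem statement by iterating the $q$-functional equation for $Y(t)$. Because any solution of the recurrence \eqref{eqdiffeqY} is uniquely determined by $y_0$ and $y_1$, both identities \eqref{eqgecn} and \eqref{eqgedn} reduce to substituting the initial data \eqref{eqIC} into this single formula and simplifying.

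For $c_n(a,q)$ I take $y_0=1$, $y_1=0$, so the bracket becomes $1-atq^n$. The telescoping identity
\[
(at;q)_{n+1}=(at;q)_n\,(1-atq^n)
\]
then cancels this factor against a single term in the denominator, producing \eqref{eqgecn} at once. For $d_n(a,q)$ I take $y_0=0$, $y_1=1$, so the bracket becomes $tq^n$; combining $q^{n(n-1)}\cdot q^n=q^{n^2}$ with $t^{2n}\cdot t=t^{2n+1}$ yields \eqref{eqgedn} on the nose.

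There is essentially no obstacle to the argument, since the substantive work is already carried by the iterative derivation of $Y(t)$ preceding the theorem. To be scrupulous, I would also note that the right-hand sides of \eqref{eqgecn} and \eqref{eqgedn} are well-defined formal power series in $t$ (each $1/(at;q)_n$ is manifestly such a series, and the factor $t^{2n}$ makes the double sum triangular), and that their $t$-coefficient sequences satisfy \eqref{eqdiffeqY} with the prescribed initial values; both facts follow directly from the uniqueness of formal-power-series solutions of the $q$-functional equation $Y(t)=(y_0+t(y_1-ay_0))/(1-at)+t^2Y(qt)/(1-at)$ that was used to build $Y(t)$ in the first place.
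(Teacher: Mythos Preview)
Your proof is correct and is exactly the argument the paper intends: the theorem is stated immediately after the general formula for $Y(t)$, and the paper leaves it to the reader to plug in the initial data \eqref{eqIC}, which you have done carefully (including the cancellation $(at;q)_{n+1}=(at;q)_n(1-atq^n)$ for the $c_n$ case and the recombination $q^{n(n-1)}\cdot q^n=q^{n^2}$ for the $d_n$ case). Your additional remark on formal-power-series well-definedness is a welcome clarification but not required.
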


It is clear from the initial conditions \eqref{eqIC} and the recurrence 
relation  \eqref{eqdiffeqY} that both $\{c_n(a,q)\}$ and $\{d_n(a,q)\}$ 
are polynomials in   $a$ and in $q$. 
\begin{thm}
The polynomials  $\{c_n(a,q)\}$ and $\{d_n(a,q)\}$ have the explicit form 
\bea
\label{eqcn}
c_n(a, q) &=& \sum_{j=0}^{\lfloor(n-2)/2\rfloor} q^{j(j+1)}\; \gauss{n-j-2}{j} 
a ^{n-2j-2},  \\
d_n(a,q) &=& \sum_{j=0}^{\lfloor(n-1)/2\rfloor} q^{j^2}\; \gauss{n-j-1}{j}
a ^{n-2j-1}.
\label{eqdn} 
\eea
\end{thm}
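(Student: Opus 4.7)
The plan is to extract the coefficient of $t^n$ from the two generating functions \eqref{eqgecn} and \eqref{eqgedn} by expanding $1/(at;q)_n$ via the standard $q$-series identity
\[
\frac{1}{(z;q)_n} = \sum_{k=0}^{\infty} \gauss{n+k-1}{k} z^k,
\]
which is a direct consequence of the $q$-binomial theorem (it is obtained from $\sum_k \frac{(q^n;q)_k}{(q;q)_k} z^k = (q^n z;q)_\infty/(z;q)_\infty = 1/(z;q)_n$, together with $(q^n;q)_k/(q;q)_k = \gauss{n+k-1}{k}$).

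For $c_n$, I would substitute this expansion into \eqref{eqgecn}, giving a double sum
\[
\sum_{n=0}^{\infty} c_n(a,q)\,t^n \;=\; \sum_{\ell=0}^{\infty}\sum_{k=0}^{\infty} q^{\ell(\ell-1)}\,\gauss{\ell+k-1}{k}\,a^k\, t^{2\ell+k}.
\]
Extracting the coefficient of $t^n$ imposes $k = n-2\ell$ with $\ell \geq 1$ (the $\ell = 0$ term only contributes to $c_0 = 1$), so
\[
c_n(a,q) = \sum_{\ell=1}^{\lfloor n/2\rfloor} q^{\ell(\ell-1)}\gauss{n-\ell-1}{n-2\ell}\,a^{n-2\ell}.
\]
Reindexing via $j = \ell - 1$, invoking the symmetry $\gauss{m}{k}=\gauss{m}{m-k}$, and observing $\lfloor n/2\rfloor - 1 = \lfloor (n-2)/2\rfloor$, I recover \eqref{eqcn}.

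For $d_n$, the same strategy applies to \eqref{eqgedn}: expand $1/(at;q)_{\ell+1}$ to obtain
\[
\sum_{n=0}^{\infty} d_n(a,q)\,t^n \;=\; \sum_{\ell=0}^{\infty}\sum_{k=0}^{\infty} q^{\ell^2}\,\gauss{\ell+k}{k}\,a^k\, t^{2\ell+k+1}.
\]
Extracting the coefficient of $t^n$ forces $k = n - 2\ell - 1 \geq 0$, giving upper limit $\lfloor (n-1)/2\rfloor$, and applying $\gauss{n-\ell-1}{n-2\ell-1} = \gauss{n-\ell-1}{\ell}$ reproduces \eqref{eqdn} after renaming $\ell \mapsto j$.

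There is no genuine obstacle here beyond careful index-bookkeeping: the real work was done in deriving the generating functions \eqref{eqgecn}--\eqref{eqgedn} in the preceding theorem. The only subtlety is matching the upper summation limits to the stated floor functions and dealing correctly with the degenerate $\ell = 0$ term in the $c_n$ case.
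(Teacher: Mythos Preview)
Your proof is correct and follows exactly the approach the paper indicates: the paper's entire proof is the one-line remark that the result follows from the generating functions \eqref{eqgecn}--\eqref{eqgedn} together with the $q$-binomial theorem, and you have accurately supplied the details of that computation. The index bookkeeping (the $\ell=0$ case, the reindexing $j=\ell-1$, the symmetry of the $q$-binomial coefficient, and the floor identities) is all handled correctly.
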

 The proof follows form equations \eqref{eqgecn} and \eqref{eqgedn};   and the 
 $q$-binomial theorem. 
\begin{thm}
We have 
\bea
\notag
\begin{gathered}
\sum_{n = -\infty}^\infty \frac{q^{n^2+mn}}{(aq;q)_n} 
= (-1)^m q^{-\binom{m}{2}}\\
\times 
 \left[c_m(a,q) \sum_{n = -\infty}^\infty \frac{q^{n^2}}{(aq;q)_n}
 +  d_m(a,q) \sum_{n = -\infty}^\infty \frac{q^{n^2+n}}{(aq;q)_n} \right]
\end{gathered}
\eea
\end{thm}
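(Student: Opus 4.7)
The plan is to recognize the renormalization $\tilde{u}_m(a,q) := (-1)^m q^{\binom{m}{2}} u_m(a,q)$ (so that $u_m = (-1)^m q^{-\binom{m}{2}} \tilde{u}_m$) as a particular solution of the second-order linear $q$-difference recurrence \eqref{eqdiffeqY}, and then to invoke the uniqueness of solutions to that recurrence once the two initial values $y_0, y_1$ are prescribed.

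First I would substitute the renormalization into the identity $q^{m+1} u_{m+2}(a,q) = u_m(a,q) - a u_{m+1}(a,q)$, which the author has already derived from the bilateral sum defining $u_m$. Using the elementary identities $\binom{m+2}{2} - \binom{m}{2} = 2m+1$ and $\binom{m+2}{2} - \binom{m+1}{2} = m+1$ to absorb the powers of $q$, and cancelling the common sign $(-1)^m$, a short calculation confirms that $\{\tilde{u}_m(a,q)\}$ indeed satisfies \eqref{eqdiffeqY}. This is the only step that involves any real computation.

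Because \eqref{eqdiffeqY} is a two-term linear recurrence, every solution is uniquely determined by the pair $(y_0, y_1)$. The polynomial sequences $\{c_m(a,q)\}$ and $\{d_m(a,q)\}$ are, by their defining initial conditions \eqref{eqIC}, the basis solutions corresponding to initial data $(1,0)$ and $(0,1)$, respectively. Hence
$$
\tilde{u}_m(a,q) \;=\; \tilde{u}_0(a,q)\,c_m(a,q) \;+\; \tilde{u}_1(a,q)\,d_m(a,q), \qquad m \geq 0.
$$
Evaluating $\tilde{u}_m = (-1)^m q^{\binom{m}{2}} u_m$ at $m = 0$ and $m = 1$ reads off the coefficients: $\tilde{u}_0 = u_0(a,q) = \sum_n q^{n^2}/(aq;q)_n$, and $\tilde{u}_1$ is, up to sign, the bilateral series $\sum_n q^{n^2+n}/(aq;q)_n$. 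Multiplying the displayed line through by $(-1)^m q^{-\binom{m}{2}}$ recovers the identity in the form stated.

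The main obstacle is purely bookkeeping: keeping the alternating sign and the Gaussian-type exponent $\binom{m}{2}$ straight while translating the recursion from $u_m$ to $\tilde{u}_m$, and making sure the sign extracted at $m=1$ is consistent with the sign attached to $d_m(a,q)$ in the final statement. Beyond that, the theorem is essentially a restatement of the fact that a linear second-order recurrence has a two-dimensional solution space, with $\{c_m, d_m\}$ serving as the designated basis.
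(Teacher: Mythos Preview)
Your proposal is correct and follows exactly the route the paper takes: the text preceding the theorem already derives the three--term relation $q^{m+1}u_{m+2}=u_m-au_{m+1}$, passes to $\tilde u_m=(-1)^m q^{\binom{m}{2}}u_m$ so that $\{\tilde u_m\}$ satisfies \eqref{eqdiffeqY}, and introduces $c_m,d_m$ as the fundamental pair of solutions with initial data \eqref{eqIC}; the theorem is then the immediate consequence you describe, namely that any solution of a second--order linear recurrence is a linear combination of these two. Your caveat about the sign at $m=1$ is on point: since $\tilde u_1=-u_1$, the coefficient of $d_m$ carries a minus sign, in agreement with the $m$--version \eqref{eqmform} (the plus sign in the displayed statement is a slip).
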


The case $a = 1$ is the $m$-version of the Rogers-Ramanujan 
identities in \eqref{eqmform} first proved by Garret, Ismail, and Stanton 
\cite{Gar:Ism:Sta}.

\section{Rogers-Ramanujan Type Identities}
In this section we prove several identities of Rogers-Ramanujan type. 
One of the proofs uses the Ramanujan ${}_1\psi_1$ sum 
\cite[(II.29)]{Gas:Rah}
\bea
\label{eq1psi1}
\sum_{-\infty}^\infty \frac{(a;q)_n}{(b;q)_n} z^n = 
\frac{(q, b/a, az, q/az;q)_\infty}{(b, q/a, z, b/az;q)_\infty}, \quad \left|\frac{b}{a}\right| < |z| < 1. 
\eea

Throughout this section we define $\rho$ by 
\bea
\rho=e^{2\pi i/3}. 
\label{eqdefrho}
\eea
\begin{lem}
For nonnegative integer $j,k,\ell,m,n$ and $\rho=e^{2\pi i/3}$ we
have 
\begin{equation}
\sum_{k=0}^{n}\frac{\left(a;q\right)_{k}\left(a;q\right)_{n-k}\left(-1\right)^{k}}{\left(q;q\right)_{k}\left(q;q\right)_{n-k}}=\begin{cases}
0 & n=2m+1\\
\frac{\left(a^{2};q^{2}\right)_{m}}{\left(q^{2};q^{2}\right)_{m}} & n=2m
\end{cases},\label{eq:multiple sum 1}
\end{equation}
 and
\begin{equation}
\sum_{\begin{array}{c}
j+k+\ell=n\\
j,k,\ell\ge0
\end{array}}\frac{\left(a;q\right)_{j}\left(a;q\right)_{k}\left(a;q\right)_{\ell}}{\left(q;q\right)_{j}\left(q;q\right)_{k}\left(q;q\right)_{\ell}}\rho^{k+2\ell}=\begin{cases}
0 & 3\nmid n\\
\frac{\left(a^{3};q^{3}\right)_{m}}{\left(q^{3};q^{3}\right)_{m}} & n=3m
\end{cases}.\label{eq:multiple sum 2}
\end{equation}
 For $j,k,m,\ell,n\in\mathbb{Z}$, we have 
\begin{equation}
\sum_{j+k=n}\frac{\left(a;q\right)_{j}\left(a;q\right)_{k}\left(-1\right)^{k}}{\left(b;q\right)_{j}\left(b;q\right)_{k}}=\begin{cases}
0 & n=2m+1\\
\frac{\left(q,b/a,-b,-q/a;q\right)_{\infty}}{\left(-q,-b/a,b,q/a;q\right)_{\infty}}\frac{\left(a^{2};q^{2}\right)_{m}}{\left(b^{2};q^{2}\right)_{m}} & n=2m
\end{cases}\label{eq:multiple sum 3}
\end{equation}
 and 
\begin{equation}
\sum_{j+k+\ell=n}^{\infty}\frac{\left(a;q\right)_{j}\left(a;q\right)_{k}\left(a;q\right)_{\ell}\rho^{k+2\ell}}{\left(b;q\right)_{j}\left(b;q\right)_{k}\left(b;q\right)_{\ell}}=0\label{eq:multiple sum 4}
\end{equation}
 for $3\nmid n$, 
\begin{eqnarray}
 &  & \sum_{j+k+\ell=3m}^{\infty}\frac{\left(a;q\right)_{j}\left(a;q\right)_{k}\left(a;q\right)_{\ell}\rho^{k+2\ell}}{\left(b;q\right)_{j}\left(b;q\right)_{k}\left(b;q\right)_{\ell}}\label{eq:multiple sum 5}\\
 & = & \frac{\left(q,b/a;q\right)_{\infty}^{3}}{\left(b,q/a;q\right)_{\infty}^{3}}\frac{\left(b^{3},q^{3}a^{-3};q^{3}\right)_{\infty}}{\left(q^{3},b^{3}a^{-3};q^{3}\right)}\frac{\left(a^{3};q^{3}\right)_{m}}{\left(b^{3};q^{3}\right)_{m}}.\nonumber 
\end{eqnarray}
 \end{lem}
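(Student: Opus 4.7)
The unifying strategy is to recognize each of the five sums as a Cauchy-product coefficient in a dummy variable $z$. Specifically, the sum on the left of \eqref{eq:multiple sum 1} (respectively \eqref{eq:multiple sum 3}) will be read as the coefficient of $z^n$ in the product of two copies of a generating series evaluated at arguments $z$ and $-z$, while \eqref{eq:multiple sum 2}, \eqref{eq:multiple sum 4} and \eqref{eq:multiple sum 5} come from three copies evaluated at $z$, $\rho z$, $\rho^2 z$. The proof then reduces to simplifying the resulting infinite product by means of the cyclotomic identities
\[
(X;q)_\infty(-X;q)_\infty=(X^2;q^2)_\infty,\qquad
(X;q)_\infty(\rho X;q)_\infty(\rho^2 X;q)_\infty=(X^3;q^3)_\infty,
\]
and then reading off the coefficient of $z^{2m}$ or $z^{3m}$.

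For the unilateral identities \eqref{eq:multiple sum 1} and \eqref{eq:multiple sum 2} I would invoke the $q$-binomial theorem
\[
\sum_{n=0}^{\infty}\frac{(a;q)_n}{(q;q)_n}z^n=\frac{(az;q)_\infty}{(z;q)_\infty}, \qquad |z|<1.
\]
Multiplying the series at $z$ by the series at $-z$ telescopes to $(a^2z^2;q^2)_\infty/(z^2;q^2)_\infty$, which by a second application of the $q$-binomial theorem in base $q^2$ equals $\sum_{m\ge 0}\frac{(a^2;q^2)_m}{(q^2;q^2)_m}z^{2m}$; extracting the coefficient of $z^n$ is \eqref{eq:multiple sum 1}. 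The triple product with arguments $z,\rho z,\rho^2 z$ collapses analogously to $(a^3z^3;q^3)_\infty/(z^3;q^3)_\infty=\sum_m\frac{(a^3;q^3)_m}{(q^3;q^3)_m}z^{3m}$, giving \eqref{eq:multiple sum 2}.

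For the bilateral identities \eqref{eq:multiple sum 3}--\eqref{eq:multiple sum 5} the $q$-binomial theorem is replaced by the Ramanujan ${}_1\psi_1$ sum \eqref{eq1psi1}. For \eqref{eq:multiple sum 3} I would form the product of two ${}_1\psi_1$ series at arguments $z$ and $-z$; the eight infinite factors of the form $(\pm X;q)_\infty$ pair up under the quadratic cyclotomic identity into four factors of base $q^2$, leaving a $z$-independent constant times a ${}_1\psi_1$ in base $q^2$ with parameters $a^2,b^2$ evaluated at $z^2$. Reading off the coefficient of $z^{2m}$ then yields the right-hand side of \eqref{eq:multiple sum 3} once I verify, by applying $(X^2;q^2)_\infty=(X;q)_\infty(-X;q)_\infty$ at $X=q,b/a,b,q/a$, that the constant prefactor is $\frac{(q,b/a,-b,-q/a;q)_\infty}{(-q,-b/a,b,q/a;q)_\infty}$. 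The same computation, now with three ${}_1\psi_1$ series at $z,\rho z,\rho^2 z$, the cubic cyclotomic identity, and a ${}_1\psi_1$ in base $q^3$, simultaneously yields \eqref{eq:multiple sum 4} and \eqref{eq:multiple sum 5}.

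The main obstacle is merely bookkeeping. One must ensure the three bilateral series converge in a common domain, which is guaranteed on the annulus $|b/a|<|z|<1$ since $|\rho z|=|\rho^2 z|=|z|$; and one must carefully verify that the final simplification of the constant prefactor agrees with the form stated in \eqref{eq:multiple sum 3} and \eqref{eq:multiple sum 5}. Once the two cyclotomic product identities are invoked, both the convergence check and the prefactor simplification are routine, and no ingredient beyond the $q$-binomial theorem and \eqref{eq1psi1} is needed.
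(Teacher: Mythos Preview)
Your proposal is correct and follows essentially the same route as the paper: the paper also proves \eqref{eq:multiple sum 1}--\eqref{eq:multiple sum 2} by reading coefficients in the $q$-binomial-theorem products $\frac{(at;q)_\infty}{(t;q)_\infty}\frac{(-at;q)_\infty}{(-t;q)_\infty}=\frac{(a^2t^2;q^2)_\infty}{(t^2;q^2)_\infty}$ and its cubic analogue, and proves \eqref{eq:multiple sum 3}--\eqref{eq:multiple sum 5} by applying the ${}_1\psi_1$ sum to the corresponding bilateral product identities on the annulus $|b/a|<|x|<1$. Your write-up is in fact a bit more explicit than the paper's about the coefficient extraction and the prefactor verification, but the underlying argument is identical.
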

\begin{proof} Formula \eqref{eq:multiple sum 1} follows from
\[
\frac{\left(at;q\right)_{\infty}}{\left(t;q\right)_{\infty}}\frac{\left(-at;q\right)_{\infty}}{\left(-t;q\right)_{\infty}}=\frac{\left(a^{2}t^{2};q^{2}\right)_{\infty}}{\left(t^{2};q^{2}\right)_{\infty}}, 
\quad \left|t\right|<1, 
\]
while \eqref{eq:multiple sum 2} follows from 
\[
\frac{\left(at;q\right)_{\infty}}{\left(t;q\right)_{\infty}}\frac{\left(a\rho t;q\right)_{\infty}}{\left(\rho t;q\right)_{\infty}}\frac{\left(a\rho^{2}t;q\right)_{\infty}}{\left(\rho^{2}t;q\right)_{\infty}}=\frac{\left(a^{3}t^{3};q^{3}\right)}{\left(t^{3};q^{3}\right)}, \quad \left|t\right|<1. 
\]
For $\left|ba^{-1}\right|<\left|x\right|<1$, apply the Ramanujan
${}_{1}\psi_{1}$ sum \eqref{eq1psi1} to the identity 
\[
\frac{\left(ax,q/\left(ax\right);q\right)_{\infty}}
{\left(x,b/\left(ax\right);q\right)_{\infty}}
\frac{\left(-ax,-q/\left(ax\right);q\right)_{\infty}}
{\left(-x,-b/\left(ax\right);q\right)_{\infty}}
=\frac{\left(a^{2}x^{2},q^{2}/\left(a^{2}x^{2}\right);q^{2}\right)_{\infty}}
{\left(x^{2},b^{2}/\left(a^{2}x^{2}\right);q^{2}\right)_{\infty}},
\]
to derive \eqref{eq:multiple sum 3}. Similarly we apply \eqref{eq1psi1} to 
\begin{eqnarray*}
\bg
  \frac{\left(a^{3}x^{3},q^{3}/\left(a^{3}x^{3}\right);q^{3}\right)_{\infty}}{\left(x^{3},b^{3}/\left(a^{3}x^{3}\right);q^{3}\right)_{\infty}}\\
  =\frac{\left(ax\rho^{2},q/\left(ax\rho^{2}\right);q\right)_{\infty}}{\left(x\rho^{2},b/\left(ax\rho^{2}\right);q\right)_{\infty}}\;  \frac{\left(ax\rho,-q/\left(ax\rho\right);q\right)_{\infty}}{\left(x\rho,-b/\left(ax\rho\right);q\right)_{\infty}}\; \frac{\left(ax,q/\left(ax\right);q\right)_{\infty}}{\left(x,b/\left(ax\right);q\right)_{\infty}},
  \eg
\end{eqnarray*}
and establish \eqref{eq:multiple sum 4}-\eqref{eq:multiple sum 5}. 
\end{proof}
It must be noted that \eqref{eq:multiple sum 1} is essentially the evaluation of a continuous $q$-ultraspherical polynomial at $x =0$, 
\cite[(12.2.19)]{Ismbook}.

For $\alpha>0$, let
\begin{equation}
A_{q}^{\left(\alpha\right)}\left(a;t\right)=\sum_{n=0}^{\infty}\frac{\left(a;q\right)_{n}q^{\alpha n^{2}}t^{n}}{\left(q;q\right)_{n}},\label{eq:multiple sum 6}
\end{equation}
 in particular,
\[
A_{q}^{\left(1\right)}\left(q;t\right)=\omega\left(t;q\right),\quad A_{q^{2}}^{\left(2\right)}\left(q^{2};t^{2}\right)=\omega\left(t^{2};q^{4}\right),\quad A_{q}^{\left(1\right)}\left(0;t\right)=A_{q}\left(-t\right),
\]
where 
\[
\omega\left(v;q\right)=\sum_{n=0}^{\infty}q^{n^{2}}v^{n}.
\]

\begin{thm}
Let $\alpha\ge0$, then
\begin{equation}
A_{q^{2}}^{\left(2\alpha\right)}\left(a^{2};t^{2}\right)=\sum_{j=0}^{\infty}\frac{\left(a;q\right)_{j}q^{\alpha j^{2}}\left(-t\right)^{j}}{\left(q;q\right)_{j}}A_{q}^{\left(\alpha\right)}\left(a;tq^{2\alpha j}\right).\label{eq:multiple sum 7}
\end{equation}
For $\rho=e^{2\pi i/3}$ we have
\begin{equation}
A_{q^{3}}^{\left(3\alpha\right)}\left(a^{3};t^{3}\right)=\sum_{j,k=0}^{\infty}\frac{\left(a;q\right)_{j}\left(a;q\right)_{k}\rho^{k}q^{\alpha\left(j+k\right)^{2}}t^{j+k}}{\left(q;q\right)_{j}\left(q;q\right)_{k}}A_{q}^{\left(\alpha\right)}\left(a;\rho^{2}q^{2\alpha\left(j+k\right)}t\right).\label{eq:multiple sum 8}
\end{equation}
\end{thm}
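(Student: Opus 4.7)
The plan for both identities is to substitute the series definition \eqref{eq:multiple sum 6} into the right-hand side, collapse the quadratic $q$-exponents using
\[
\alpha(j^{2}+k^{2}+2jk) = \alpha(j+k)^{2}
\]
(and its three-index analogue for the second identity), collect monomials by the total degree in the summation indices, and apply the convolution identities \eqref{eq:multiple sum 1} and \eqref{eq:multiple sum 2} established in the preceding lemma.

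\textbf{First identity.} I would begin by expanding
\[
A_{q}^{(\alpha)}\!\left(a; t q^{2\alpha j}\right) = \sum_{k\ge 0}\frac{(a;q)_{k}\,q^{\alpha k^{2}+2\alpha jk}\,t^{k}}{(q;q)_{k}}
\]
and inserting this into the right-hand side of \eqref{eq:multiple sum 7}. After combining exponents of $q$ and $t$, the right-hand side becomes
\[
\sum_{j,k\ge 0}\frac{(a;q)_{j}(a;q)_{k}(-1)^{j}}{(q;q)_{j}(q;q)_{k}}\,q^{\alpha(j+k)^{2}}\,t^{j+k}.
\]
Setting $n = j+k$, the coefficient of $q^{\alpha n^{2}}t^{n}$ is $\sum_{j+k=n}(a;q)_{j}(a;q)_{k}(-1)^{j}/[(q;q)_{j}(q;q)_{k}]$. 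The substitution $j\leftrightarrow k$ swaps $(-1)^{j}$ with $(-1)^{k}$ while preserving the rest of the summand, so this inner sum equals the quantity evaluated in \eqref{eq:multiple sum 1}; by the lemma it vanishes for odd $n$ and equals $(a^{2};q^{2})_{m}/(q^{2};q^{2})_{m}$ when $n=2m$. The right-hand side therefore collapses to $\sum_{m\ge 0}(a^{2};q^{2})_{m}\,q^{4\alpha m^{2}}\,t^{2m}/(q^{2};q^{2})_{m}$, which is exactly $A_{q^{2}}^{(2\alpha)}(a^{2};t^{2})$.

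\textbf{Second identity.} The same plan applies with one more index. Expanding $A_{q}^{(\alpha)}(a;\rho^{2} q^{2\alpha(j+k)}t)$ in an index $\ell$ turns the right-hand side of \eqref{eq:multiple sum 8} into a triple sum over $(j,k,\ell)$; the powers of $q$ combine to $q^{\alpha[(j+k)^{2}+2(j+k)\ell+\ell^{2}]}=q^{\alpha(j+k+\ell)^{2}}$, the powers of $t$ to $t^{j+k+\ell}$, and the roots of unity to $\rho^{k+2\ell}$. Grouping by $n=j+k+\ell$ and invoking \eqref{eq:multiple sum 2} kills all terms with $3\nmid n$ and evaluates those with $n=3m$ to $(a^{3};q^{3})_{m}/(q^{3};q^{3})_{m}$. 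Reassembling, the right-hand side becomes $\sum_{m\ge 0}(a^{3};q^{3})_{m}\,q^{9\alpha m^{2}}\,t^{3m}/(q^{3};q^{3})_{m} = A_{q^{3}}^{(3\alpha)}(a^{3};t^{3})$.

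\textbf{Main obstacle.} There is no genuine difficulty beyond careful bookkeeping; the only mild subtlety is the $(-1)^{j}$ versus $(-1)^{k}$ discrepancy in the first identity, which the one-line symmetry argument above handles. Absolute convergence for $|t|$ sufficiently small (with $|q|<1$ and $\alpha\ge 0$) justifies the interchange of summations, and the identities then hold as equalities of analytic functions on the appropriate domain, or equivalently as formal power series identities in $t$.
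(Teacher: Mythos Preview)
Your proof is correct and follows exactly the route the paper indicates: expand the inner $A_{q}^{(\alpha)}$, combine the quadratic $q$-exponents into $q^{\alpha(j+k)^{2}}$ (respectively $q^{\alpha(j+k+\ell)^{2}}$), group by total degree, and invoke the convolution lemmas \eqref{eq:multiple sum 1} and \eqref{eq:multiple sum 2}. The paper's own proof is simply the one-line remark that the identities follow from \eqref{eq:multiple sum 1}, \eqref{eq:multiple sum 2} and ``straightforward series manipulation,'' so your write-up is a fully fleshed-out version of the same argument, including the harmless $j\leftrightarrow k$ symmetry observation needed to match the sign placement in \eqref{eq:multiple sum 1}.
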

\begin{proof}
These two identities can be proved by applying (\ref{eq:multiple sum 1})
and (\ref{eq:multiple sum 2}) and straightforward series manipulation.
\end{proof}

We now consider the following generalization of the ${}_1\psi_1$ function. 
For $\alpha\ge0$, define $B_{q}^{\left(\alpha\right)}$ by 
\begin{equation}
B_{q}^{\left(\alpha\right)}\left(a,b;x\right)=\sum_{n=-\infty}^{\infty}\frac{\left(a;q\right)_{n}}{\left(b;q\right)_{n}}q^{\alpha n^{2}}x^{n},\label{eq:multiple sum 10}
\end{equation}
\begin{thm} We have 
\begin{equation}
\begin{aligned}\frac{\left(-b,-q/a,q,b/a;q\right)_{\infty}}{\left(-q,-b/a,b,q/a;q\right)_{\infty}}B_{q^{2}}^{\left(2\alpha\right)}\left(a^{2},b^{2};x^{2}\right) & =\sum_{j=-\infty}^{\infty}\frac{\left(a;q\right)_{j}q^{\alpha j^{2}}\left(-x\right)^{j}}{\left(b;q\right)_{j}}B_{q}^{\left(\alpha\right)}\left(a,b;xq^{2\alpha j}\right).\end{aligned}
\label{eq:multiple sum 11}
\end{equation}
 and
\begin{equation}
\begin{aligned}B_{q^{3}}^{\left(3\alpha\right)}\left(a^{3},b^{3};x^{3}\right) & =\frac{\left(b,q/a;q\right)_{\infty}^{3}}{\left(q,b/a;q\right)_{\infty}^{3}}\frac{\left(q^{3},b^{3}a^{-3};q^{3}\right)}{\left(b^{3},q^{3}a^{-3};q^{3}\right)_{\infty}}\\
 & \times\sum_{j,k=-\infty}^{\infty}\frac{\left(a;q\right)_{j}\left(a;q\right)_{k}\rho^{k}q^{\alpha\left(j+k\right)^{2}}x^{j+k}}{\left(b;q\right)_{j}\left(b;q\right)_{k}}B_{q}^{\left(\alpha\right)}\left(a,b;xq^{2\alpha\left(j+k\right)}\right).
\end{aligned}
\label{eq:multiple sum 12}
\end{equation}
\end{thm}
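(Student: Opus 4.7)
The plan is to mimic the strategy used for the previous theorem (equations \eqref{eq:multiple sum 7} and \eqref{eq:multiple sum 8}), but now with bilateral sums, leaning on the bilateral evaluations \eqref{eq:multiple sum 3} and \eqref{eq:multiple sum 5} from the lemma in place of the unilateral ones \eqref{eq:multiple sum 1} and \eqref{eq:multiple sum 2}.

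For \eqref{eq:multiple sum 11}, I would first expand the right-hand side by substituting the definition \eqref{eq:multiple sum 10} of $B_q^{(\alpha)}(a,b;xq^{2\alpha j})$. This turns the right-hand side into the double bilateral sum
\begin{equation*}
\sum_{j,k=-\infty}^{\infty}\frac{(a;q)_j (a;q)_k}{(b;q)_j (b;q)_k}\,(-1)^j\, x^{j+k}\, q^{\alpha j^2+\alpha k^2+2\alpha jk}
=\sum_{j,k=-\infty}^{\infty}\frac{(a;q)_j (a;q)_k}{(b;q)_j (b;q)_k}\,(-1)^j\, x^{j+k}\, q^{\alpha(j+k)^2},
\end{equation*}
where the key observation is that the quadratic-exponent identity $\alpha j^{2}+\alpha k^{2}+2\alpha jk=\alpha(j+k)^{2}$ decouples the joint dependence and lets us collect by $n=j+k$. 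After setting $n=j+k$ and pulling $q^{\alpha n^2}x^n$ outside the inner sum, the $j$-sum is exactly the left-hand side of \eqref{eq:multiple sum 3}. Applying \eqref{eq:multiple sum 3} kills odd $n$ and, for $n=2m$, produces the prefactor $(q,b/a,-b,-q/a;q)_\infty/(-q,-b/a,b,q/a;q)_\infty$ times $(a^2;q^2)_m/(b^2;q^2)_m$. Summing over $m$ then reproduces $B_{q^{2}}^{(2\alpha)}(a^{2},b^{2};x^{2})$ up to that constant prefactor, which after rearrangement yields \eqref{eq:multiple sum 11}.

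The proof of \eqref{eq:multiple sum 12} is identical in spirit but with one more layer of summation. I would expand $B_q^{(\alpha)}(a,b;xq^{2\alpha(j+k)})$ on the right of \eqref{eq:multiple sum 12} into a sum over a third index $\ell$, and again use
\begin{equation*}
\alpha j^{2}+\alpha k^{2}+\alpha \ell^{2}+2\alpha(j+k)\ell=\alpha(j+k+\ell)^{2}-2\alpha jk+\text{(cross terms)},
\end{equation*}
which should be handled more cleanly by first writing $q^{\alpha(j+k)^2}q^{\alpha \ell^2}q^{2\alpha(j+k)\ell}=q^{\alpha(j+k+\ell)^2}$. Collecting by $n=j+k+\ell$, the inner triple sum is precisely the left-hand side of \eqref{eq:multiple sum 4}--\eqref{eq:multiple sum 5} with the weight $\rho^{k+2\ell}$. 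Invoking those formulas annihilates terms with $3\nmid n$ and, for $n=3m$, produces the stated constant prefactor times $(a^3;q^3)_m/(b^3;q^3)_m$. The surviving single sum over $m$ is $B_{q^{3}}^{(3\alpha)}(a^{3},b^{3};x^{3})$, and again a rearrangement of the constant gives \eqref{eq:multiple sum 12}.

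The only real obstacle is legitimizing the interchange of the two bilateral summations, since termwise manipulation of doubly-infinite sums needs absolute convergence. I expect this to be controllable in the annulus $|b/a|<|x|<1$ (with auxiliary restrictions on the $q$-cubed version) inherited from the domain of the Ramanujan ${}_1\psi_1$ sum \eqref{eq1psi1} used to prove \eqref{eq:multiple sum 3} and \eqref{eq:multiple sum 5}; the Gaussian factor $q^{\alpha n^2}$ with $\alpha\ge 0$ and $|q|<1$ only improves convergence in the $|n|\to\infty$ direction, so no new restrictions should arise from $\alpha>0$. Once the identity is established on this annulus, standard analytic continuation in $x$ (and in the parameters $a,b$) extends it to the natural domain of both sides.
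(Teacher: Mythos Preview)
Your approach is exactly the paper's: its entire proof reads ``follows from \eqref{eq:multiple sum 3}, \eqref{eq:multiple sum 4} and \eqref{eq:multiple sum 5} and straightforward series manipulation,'' and you have spelled out precisely that manipulation---expand the inner $B_q^{(\alpha)}$, use $\alpha j^2+\alpha k^2+2\alpha jk=\alpha(j+k)^2$ (and its three-variable analogue) to collect by $n=j+k$ (resp.\ $n=j+k+\ell$), and invoke the lemma.

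One point to tighten in your treatment of \eqref{eq:multiple sum 12}: expanding $B_q^{(\alpha)}\bigl(a,b;xq^{2\alpha(j+k)}\bigr)$ as the statement is printed produces only the weight $\rho^{k}$ in the resulting triple sum, not $\rho^{k+2\ell}$, so \eqref{eq:multiple sum 4}--\eqref{eq:multiple sum 5} do not apply directly. The missing $\rho^{2\ell}$ comes from a $\rho^{2}$ in the argument of $B_q^{(\alpha)}$, exactly as in the parallel formula \eqref{eq:multiple sum 8}; you should either note that \eqref{eq:multiple sum 12} is to be read with $B_q^{(\alpha)}\bigl(a,b;\rho^{2}xq^{2\alpha(j+k)}\bigr)$ (a typographical slip in the statement) or else explain where the $\rho^{2\ell}$ originates, rather than asserting it.
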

The proof follows from   \eqref{eq:multiple sum 3}, 
\eqref{eq:multiple sum 4} and
\eqref{eq:multiple sum 5} and straightforward series manipulation. 

\begin{cor}
The following Rogers-Ramanujan type identities hold
\bea
\quad \frac{\left(-a,-q/a,q,q;q\right)_{\infty}}{\left(a,q/a,-q,-q;q\right)_{\infty}}
\sum_{n=-\infty}^{\infty}\frac{q^{4n^{2}}x^{2n}}{1-a^{2}q^{2n}}
&=&\sum_{j,k=-\infty}^{\infty}\frac{q^{\left(j+k\right)^{2}}
\left(-1\right)^{j}x^{j+k}}{\left(1-aq^{j}\right)\left(1-aq^{k}\right)},
\label{eq:multiple sum 13}\\
\frac{\left(q,q;q\right)_{\infty}}{\left(-q,-q;q\right)_{\infty}}
\sum_{n=-\infty}^{\infty}\frac{q^{4n^{2}}x^{2n}}{1+q^{2n+1}}
&=&\sum_{j,k=-\infty}^{\infty}\frac{q^{\left(j+k\right)^{2}}
\left(-1\right)^{j}x^{j+k}}{\left(1+iq^{j+1/2}\right)\left(1+iq^{k+1/2}\right)}.
\label{eq:multiple sum 14}
\eea
\end{cor}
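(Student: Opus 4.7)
The plan is to obtain both identities as specializations of \eqref{eq:multiple sum 11} with $\alpha=1$, by choosing parameters that collapse the ratio $(a;q)_n/(b;q)_n$ into a single linear denominator.

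First I would set $\alpha=1$ in \eqref{eq:multiple sum 11} and expand the inner factor $B_q^{(1)}(a,b;xq^{2j})$ using its defining series \eqref{eq:multiple sum 10}. Since $j^{2}+k^{2}+2jk=(j+k)^{2}$, the right-hand side collapses to
\[
\sum_{j,k\in\Z}\frac{(a;q)_{j}(a;q)_{k}}{(b;q)_{j}(b;q)_{k}}\,q^{(j+k)^{2}}(-1)^{j}x^{j+k},
\]
which already has the structure displayed on the right in the corollary.

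Next I would specialize $b=aq$. Writing $(a;q)_n=(a;q)_\infty/(aq^n;q)_\infty$ for all $n\in\Z$, one obtains $(a;q)_n/(aq;q)_n=(1-a)/(1-aq^n)$, so the double sum equals
\[
(1-a)^{2}\sum_{j,k\in\Z}\frac{q^{(j+k)^{2}}(-1)^{j}x^{j+k}}{(1-aq^{j})(1-aq^{k})}.
\]
On the left of \eqref{eq:multiple sum 11} one finds $B_{q^{2}}^{(2)}(a^{2},a^{2}q^{2};x^{2})=(1-a^{2})\sum_{n}q^{4n^{2}}x^{2n}/(1-a^{2}q^{2n})$, while the prefactor $(-b,-q/a,q,b/a;q)_\infty/(-q,-b/a,b,q/a;q)_\infty$ simplifies, via $(aq;q)_\infty=(a;q)_\infty/(1-a)$ and $(-aq;q)_\infty=(-a;q)_\infty/(1+a)$, to
\[
\frac{1-a}{1+a}\cdot\frac{(-a,-q/a,q,q;q)_{\infty}}{(-q,-q,a,q/a;q)_{\infty}}.
\]
Combining this with the factor $1-a^{2}=(1-a)(1+a)$ produces an overall $(1-a)^{2}$, which cancels the $(1-a)^{2}$ on the right-hand side and yields \eqref{eq:multiple sum 13}.

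Finally, I would obtain \eqref{eq:multiple sum 14} by the further substitution $a=-iq^{1/2}$ in \eqref{eq:multiple sum 13}. Then $1-aq^{j}=1+iq^{j+1/2}$ and $1-a^{2}q^{2n}=1+q^{2n+1}$, giving the sums in the claimed form. In the prefactor, the identities $-a=q/a=iq^{1/2}$ and $-q/a=a=-iq^{1/2}$ force the $q$-shifted factorials involving $\pm iq^{1/2}$ to cancel pairwise between numerator and denominator, leaving only $(q,q;q)_{\infty}/(-q,-q;q)_{\infty}$. The sole delicate point is the bookkeeping of the linear factors $(1\pm a)$ in the Pochhammer simplification of the second step; everything else is routine substitution.
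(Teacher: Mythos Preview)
Your proof is correct and follows exactly the same route as the paper: specialize \eqref{eq:multiple sum 11} with $\alpha=1$ and $b=aq$ to obtain \eqref{eq:multiple sum 13}, then set $a=-iq^{1/2}$ for \eqref{eq:multiple sum 14}. You have simply supplied the bookkeeping details (the cancellation of $(1-a)^2$ and the pairwise cancellation of $(\pm iq^{1/2};q)_\infty$) that the paper leaves to the reader.
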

\begin{proof}  Formula \eqref{eq:multiple sum 13} is the special case 
 $\alpha=1$ and $b=aq$ of (\ref{eq:multiple sum 11})  while 
 \eqref{eq:multiple sum 14} is the speical case  $a=-q^{1/2}i$ 
 of \eqref{eq:multiple sum 13}.
 \end{proof}
  
The special choice  $\alpha=1$ and $b=aq$ in (\ref{eq:multiple sum 12}) establishes 
\begin{equation}
\begin{aligned}\sum_{n=-\infty}^{\infty}\frac{q^{9n^{2}}x^{3n}}{1-a^{3}q^{3n}} & =\frac{\left(q^{3};q^{3}\right)_{\infty}^{2}}{\left(q;q\right)_{\infty}^{6}}\frac{\left(a,q/a;q\right)_{\infty}^{3}}{\left(a^{3},q^{3}a^{-3};q^{3}\right)_{\infty}}\\
 & \times\sum_{j,k,\ell=-\infty}^{\infty}\frac{\rho^{k+2\ell}q^{\left(j+k+\ell\right)^{2}}x^{j+k+\ell}}{\left(1-aq^{j}\right)\left(1-aq^{k}\right)\left(1-aq^{\ell}\right)}.
\end{aligned}
\label{eq:multiple sum 15}
\end{equation}
 Two special case   of \eqref{eq:multiple sum 15}  are worth noting.  First when  $a=q^{1/3}$ we find that
\begin{equation}
\begin{aligned} & \frac{\left(q;q\right)_{\infty}^{7}}{\left(q^{3};q^{3}\right)_{\infty}^{3}\left(q^{1/3},q^{2/3};q\right)_{\infty}^{3}}\sum_{n=-\infty}^{\infty}\frac{q^{9n^{2}}x^{3n}}{1-q^{3n+1}}\\
 & =\sum_{j,k,\ell=-\infty}^{\infty}\frac{\rho^{k+2\ell}q^{\left(j+k+\ell\right)^{2}}x^{j+k+\ell}}{\left(1-q^{j+1/3}\right)\left(1-q^{k+1/3}\right)\left(1-q^{\ell+1/3}\right)}.
\end{aligned}
\label{eq:multiple sum 16}
\end{equation}
With $a=-q^{1/3}$ in (\ref{eq:multiple sum 15}) we 
conclude that 
\begin{equation}
\begin{aligned}\sum_{n=-\infty}^{\infty}\frac{q^{9n^{2}}x^{3n}}
{1+q^{3n+1}} & =\frac{\left(q^{3};q^{3}\right)_{\infty}^{2}}
{\left(q;q\right)_{\infty}^{6}}
\frac{\left(-q^{1/3},-q^{2/3};q\right)_{\infty}^{3}}
{\left(-q^{2},-q;q^{3}\right)_{\infty}}\\
 & \times\sum_{j,k,\ell=-\infty}^{\infty}
 \frac{\rho^{k+2\ell}q^{\left(j+k+\ell\right)^{2}}x^{j+k+\ell}}
 {\left(1+q^{j+1/3}\right)\left(1+q^{k+1/3}\right)\left(1+q^{\ell+1/3}\right)}.
\end{aligned}
\label{eq:multiple sum 17}
\end{equation}

It is clear that one can generate other identities by specializing the parameters in the master formulas. 
  
 \section{$q$-Lommel Polynomials}
 Iterating the three term recurrence relation of the $q$-Bessel function leads to     
\begin{equation}
q^{n\nu+n(n-1)/2}J_{\nu+n}^{(2)}(x;q)=h_{n,\nu}\left(\frac{1}{x};q\right) 
J_{\nu}^{(2)}(x;q)-h_{n-1,\nu+1}\left(\frac{1}{x};q\right)
J_{\nu-1}^{(2)}(x;q), \label{eq:special value bessel 13}
\end{equation}
where  $h_{n,\nu}\left(x;q\right)$ are  the $q$-Lommel polynomials 
introduced in \cite{Ism82}, \cite[\S 14.4]{Ismbook}. 
It is more convenient to use the polynomials 
\bea
p_{n,\nu}(x;q) := e^{-i\pi n/2} h_{n,\nu}(ix) = \sum_{j=0}^{\lfloor{n/2}\rfloor} 
\frac{(q^\nu,q;q)_{n-j}}{(q, q^\nu;q)_j(q;q)_{n-2j}} (2x)^{n-2j} 
q^{j(j+\nu-1)}. 
\eea
The identity  \eqref{eq:special value bessel 13} expressed in terms of 
$I_\nu$'s is 
\bea
\label{eq3trrI}
\bg
 (-1)^n q^{n\nu+n(n-1)/2}I_{\nu+n}^{(2)}(x;q) \\
 =p_{n,\nu}(1/x;q) 
I_{\nu}^{(2)}(x;q)- p_{n-1,\nu+1}(1/x;q) I_{\nu-1}^{(2)}(x;q), 
\eg
\eea
When $x = 2q^{-k/2}$ we obtain, after replacing $\nu$ by $\nu+k$, 
\begin{eqnarray*}
\bg
 (-1)^n q^{n(n+2\nu+k-1)/2}S_{k}\left(-q^{\nu+n};q\right)  =  
p_{n,\nu+k}(q^{k/2}/2;q)S_{k}\left(-q^{\nu};q\right)\\
  -  q^{k/2}p_{n-1,\nu+k+1}(q^{k/2}/2;q)S_{k}\left(-q^{\nu-1};q\right). 
  \eg
\end{eqnarray*}
We now rewrite this as a functional equation in the form 
\begin{eqnarray}
\bg
 y^n  q^{n(n+k-1)/2}S_{k}\left(y q^{n};q\right)  =  
u_{n}(q^{k/2}, -yq^k;q)S_{k}\left(y;q\right)\\
  -  q^{k/2}u_{n-1}(q^{k/2}, -yq^{k+1};q)S_{k}\left(y/q;q\right). 
  \eg
\end{eqnarray}
with 
\bea
u_n(x,y) = \sum_{j=0}^{\lfloor{n/2}\rfloor} 
\frac{(y,q;q)_{n-j}}{(q, y;q)_j(q;q)_{n-2j}} x^{n-2j}.  
\eea

Therefore 
\begin{eqnarray}
\label{eq:special value bessel 15}
\bg
S_k(y;q) =  
\frac{y^n  q^{n(n+k-1)/2}u_{n}(q^{k/2}, -yq^{k+1};q)}
{\Delta_n}S_{k}(y q^{n};q) 
 \\
 -  \frac{y^{n+1}q^{(n+1) (n+ k)/2}
 u_{n+1}(q^{k/2}, -yq^{k+1};q)}{\Delta_{n}}
 S_{k}(-q^{\nu+n+1};q),
 \eg
\end{eqnarray}
where
\bea
\bg
\Delta_{n} =u_{n}(q^{k/2}, -yq^{k+1};q) u_{n}(q^{k/2}, -yq^{k};q) 
\qquad \qquad\\
 \qquad \qquad - u_{n+1}(q^{k/2}, -yq^{k};q) u_{n-1}(q^{k/2}, -yq^{k+1};q).
  \label{eq:special value bessel 16}
\eg
\eea

\section{Identities Involving Stieltjs--Wigert Polynomials}

In this section we state several identities involving Stieltjes--Wigert polynomials and the Ramanujan function.   
\bea
\left(xt,-t;q\right)_{\infty}&=&\sum_{n=0}^{\infty}q^{\binom{n}{2}}t^n S_{n}\left(xq^{-n};q\right). 
\label{eq:stieltjes 5.1}\\
\frac{q^{\binom{n}{2}}x^{n}}{\left(q;q\right)_{n}}&=&\sum_{k=0}^{n}
\frac{(-1)^kq^{\binom{k}{2}}}{\left(q;q\right)_{n-k}} 
S_{k}\left(xq^{-k};q\right),
\label{eq:stieltjes 5.2}\\
S_{n}\left(x\right)&=&\sum_{k=0}^{\infty}\frac{q^{\binom{k+1}{2}}
(xq^n)^{k}A_{q}\left(xq^{k}\right)}{\left(q;q\right)_n
\left(q;q\right)_{k}},
\label{eq:stieltjes 5.3}\\
S_{n}\left(ab;q\right)&=& b^{n}\sum_{k=0}^{n}\frac{\left(b^{-1};q\right)_{k}
\left(-q^{1-n}\right)^{k}q^{\binom{k}{2}}}{\left(q;q\right)_{k}}S_{n-k}
\left(aq^{k};q\right),
\label{eq:stieltjes 5.4}\\
S_{n}\left(a;q\right)&=&\frac{\left(-aq;q\right)_{\infty}}
{\left(q,-aq;q\right)_{n}}\sum_{k=0}^{\infty}\frac{q^{k^{2}}
\left(-a\right)^{k}}{\left(q,-aq^{n+1};q\right)_{k}},
\label{eq:stieltjes 5.5}\\
S_{2n+1}\left(q^{-2n-1};q\right)&=&0,\quad S_{2n}\left(q^{-2n};q\right)=\frac{\left(-1\right)^{n}q^{-n^{2}}}{\left(q^{2};q^{2}\right)_{n}}.\label{eq:stieltjes 5.6}\\
S_{n}\left(-q^{-n+1/2};q\right)&=&\frac{q^{-\left(n^{2}-n\right)/4}}{\left(q^{1/2};q^{1/2}\right)_{n}}, 
 \label{eq:stieltjes 5.7}\\
S_{n}\left(-q^{-n-1/2};q\right)&=&\frac{q^{-\left(n^{2}+n\right)/4}}{\left(q^{1/2};q^{1/2}\right)_{n}},\label{eq:stieltjes 5.8}
\\
A_{q}\left(wz\right)&=&\left(wq;q\right)_{\infty}\sum_{n=0}^{\infty}\frac{q^{n^{2}}w^{n}}{\left(wq;q\right)_{n}}S_{n}\left(zq^{-n};q\right).\label{eq:stieltjes 5.9}
\\
A_{q}\left(z\right)&=&\left(q;q\right)_{m}
\sum_{n=0}^{\infty}\frac{q^{n^{2}+mn}\left(-z\right)^{n}}{\left(q;q\right)_{n}}S_{m}\left(zq^{n};q\right).
\label{eq:stieltjes 10}
\eea

\begin{proof}[Proofs]
Formula \eqref{eq:stieltjes 5.1} follows from the definition 
\eqref{eq:stieltjes1} and Euler's identities. Dividing both sides of 
 \eqref{eq:stieltjes 5.1} by $(-t;q)_\infty$ then expand $1/(-t;q)_\infty$ on 
 the right-hand side implies  \eqref{eq:stieltjes 5.2}. The expansion 
 \eqref{eq:stieltjes 5.3} follows from \eqref{eqdefAq},  and the 
 $q$-binomial theorem in the form 
 \bea
 \label{eqfiniteqbinom}
 (x;q)_n = \sum_{j=0}^n \gauss{n}{j}  (-x)^j q^{\binom{k}{2}}. 
 \eea
 To prove  \eqref{eq:stieltjes 5.4} start with  \eqref{eq:stieltjes 5.1} as 
 \bea
 \sum_{n=0}^\infty q^{\binom{n}{2}} t^n S_n(abq^{-n};q) 
 = (abt, -t;q)_\infty 
 = (abt, -bt;q)_\infty \; \frac{(-t;q)_\infty}{(-bt;q)_\infty},
 \notag
 \eea
 then expand the first product in $S_k(aq^{-k};q)$ and the second term
  using the $q$-binomial theorem. The proof of  \eqref{eq:stieltjes 5.5}
 consists of writing $(-aq;q)_\infty/(-aq;q)_n(-aq^{n+1};q)_k$ as 
 $-aq^{n+k+1};q)_\infty$ then expand this infinite product and use 
 \eqref{eqfiniteqbinom}.  The special values in \eqref{eq:stieltjes 5.6} follow from letting $x =1$ in \eqref{eq:stieltjes 5.1} then equate like powers of $t$.  Similarly the special values in \eqref{eq:stieltjes 5.7} 
 and \eqref{eq:stieltjes 5.8} follow from putting $x = -q^{\mp 1/2}$ in 
 \eqref{eq:stieltjes 5.1}. Replace $x$ by $z$ in then multiply by 
 $(-w)^nq^{\binom{n+1}{2}}$ and sum to prove  \eqref{eq:stieltjes 5.9}. 
 To prove  \eqref{eq:stieltjes 10} we expand the right-hand side in 
 powers of $z$ and realize that the coefficient of $(-z)^n$ is 
 \bea
 \notag
 \frac{q^{n^2+mn}}{(q;q)_n} {}_2\phi_1(q^{-m}, q^{-n}; 0,; q,q).
 \eea
  By the $q$-Chu-Vandermonde sum \cite[(II.6)]{Gas:Rah} the 
  ${}_2\phi_1$  equals $q^{-mn}$. 
\end{proof}

We note that the polynomials $\{S_n(xq^{-n};q)\}$ are related to the $q^{-1}$-Hermite polynomials, \cite{Ask},  \cite{Ism:Mas}, which  
 are defined by 
 \begin{eqnarray}
 h_n{(\sinh\xi\,|\, q)} &=& \sum^n_{k=0}
 \frac{(q;q)_n}{(q;q)_k(q;q)_{n-k}}\, (-1)^k q^{k(k-n)} e^{(n-2k)\xi}.  
 \label{eqqH}
\eea
Indeed 
\bea
\label{eqSWasqH}
S_n(e^{-2\xi}q^{-n};q) = \frac{1}{(q;q)_n} h_n{(\sinh\xi\,|\, q)}. 
\eea
In fact \eqref{eq:stieltjes 5.1} is equivalent to the generating function 
for the $q^{-1}$-Hermite polynomials, \cite{Ismbook}. Moreover 
\eqref{eqSWasqH} and the generating function \cite[Theorem 21.3.1]{Ismbook}  lead to 
\bea
\sum_{n=0}^\infty \frac{(q;q)_nq^{n^2/4}}{(\sqrt{q};\sqrt{q})_n} t^n 
S_n(zq^{-n};q) = \frac{(-tq^{1/4}, -tq^{1/4} z;\sqrt{q})_\infty}
{(-t^2 z;q)_\infty}.
\eea
The 
Poisson kernel of $q^{-1}$-Hermite polynomials,  
 \cite[Theorem 21.2.3]{Ismbook} implies 
\bea
\sum_{n=0}^\infty  (q;q)_nq^{\binom{n}{2}} t^n 
S_n(zq^{-n};q)S_n(\zeta q^{-n};q) 
= \frac{(-t, -t z\zeta, t z, t \zeta;q)_\infty}{(t^2z \zeta/q;q)_\infty}. 
\eea
Similarly one can derive other generating  relations. 

It must be noted that \eqref{eq:stieltjes 5.7} and 
\eqref{eq:stieltjes 5.8} when written in terms of the $q^{-1}$-Hermite 
polynomials are the evaluation of $h_n(0|q)$, see 
\cite[Corollary 21.2.2]{Ismbook}.  It is easy to see that   
the evaluations 
\eqref{eq:stieltjes 5.7} and 
\eqref{eq:stieltjes 5.8}  are equivalent to  the identity in the following theorem.  
\begin{thm}
We have
\bea
\label{eqGFhn0}
A_{q^{2}}\left(-b^{2}\right)=\left(b\sqrt{q};q\right)_{\infty}\sum_{n=0}^{\infty}\frac{q^{n^{2}/2}b^{n}}{\left(q,b\sqrt{q};q\right)_{n}},
\eea
 \end{thm}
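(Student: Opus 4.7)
My plan is to transform the right-hand side of \eqref{eqGFhn0} into the defining series $\sum_{k\ge 0} b^{2k}q^{2k^2}/(q^2;q^2)_k$ of $A_{q^2}(-b^2)$ by an Euler expansion followed by a re-summation trick. The first step is to absorb the external factor $(b\sqrt{q};q)_\infty$ into the sum via the cancellation
\[
\frac{(b\sqrt{q};q)_\infty}{(b\sqrt{q};q)_n} \;=\; (bq^{n+1/2};q)_\infty,
\]
so that the right-hand side of \eqref{eqGFhn0} becomes $\sum_{n\ge 0} q^{n^2/2} b^n (bq^{n+1/2};q)_\infty /(q;q)_n$. Next I would expand this remaining infinite product by Euler's identity $(z;q)_\infty = \sum_{m\ge 0} (-z)^m q^{\binom{m}{2}}/(q;q)_m$ applied at $z = bq^{n+1/2}$, producing a double sum over $n,m\ge 0$.

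The decisive algebraic step is the exponent calculation
\[
\frac{n^2}{2} + m\!\left(n+\tfrac12\right) + \binom{m}{2} \;=\; \frac{(n+m)^2}{2}.
\]
This is a quick expansion but it is what drives the whole argument: once it is in hand, reindexing by $N=n+m$ collapses the double sum to
\[
\sum_{N\ge 0} b^N q^{N^2/2} \sum_{m=0}^N \frac{(-1)^m}{(q;q)_m (q;q)_{N-m}}.
\]

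To finish I would recognise the inner sum as the $a=0$ specialisation of \eqref{eq:multiple sum 1}, which vanishes when $N$ is odd and equals $1/(q^2;q^2)_k$ when $N=2k$. Substituting then yields $\sum_{k\ge 0} b^{2k}q^{2k^2}/(q^2;q^2)_k$, which by \eqref{eqdefAq} is exactly $A_{q^2}(-b^2)$.

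I do not foresee a genuine obstacle; the whole proof is a direct unwinding, and the only step that needs a moment's thought is the exponent simplification. Conceptually, the argument is a repackaging of the product identity $(t;q)_\infty(-t;q)_\infty = (t^2;q^2)_\infty$ that underlies \eqref{eq:multiple sum 1}, which is consistent with the paper's remark that \eqref{eqGFhn0} is equivalent to the Stieltjes--Wigert evaluations \eqref{eq:stieltjes 5.7}--\eqref{eq:stieltjes 5.8}.
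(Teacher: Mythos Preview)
Your argument is correct: the exponent identity $\tfrac{n^2}{2}+m(n+\tfrac12)+\binom{m}{2}=\tfrac{(n+m)^2}{2}$ holds, the reindexing is legitimate, and the $a=0$ case of \eqref{eq:multiple sum 1} gives exactly the inner-sum evaluation you need.

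By way of comparison, the paper does not supply a proof of \eqref{eqGFhn0} at all; it only asserts that the identity is equivalent to the special values \eqref{eq:stieltjes 5.7}--\eqref{eq:stieltjes 5.8}, which in turn were obtained from the generating function \eqref{eq:stieltjes 5.1} at $x=-q^{\mp 1/2}$ using the factorisation $(-q^{1/2}t;q)_\infty(-t;q)_\infty=(-t;q^{1/2})_\infty$. Your proof bypasses the Stieltjes--Wigert polynomials entirely and works directly with the series, but the engine is the same product identity in the guise of \eqref{eq:multiple sum 1}. So the two approaches are not genuinely different in substance; yours simply makes explicit the short computation the paper leaves to the reader, and does so cleanly.
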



\end{document}